\documentclass[12pt, a4paper]{amsart}

\usepackage[utf8]{inputenc}
\usepackage[T1]{fontenc}
\usepackage{amsmath,amssymb,amsthm}
\usepackage[margin=2.5cm]{geometry}
\usepackage{enumitem}
\usepackage{url} 
\usepackage{hyperref}

\usepackage{color}

\newtheorem{theorem}{Theorem}

\newtheorem{lemma}[theorem]{Lemma}

\theoremstyle{definition}

\theoremstyle{remark}

\theoremstyle{plain}
\newtheorem{thmA}{Theorem}

\theoremstyle{plain}
\newtheorem{conjA}[thmA]{Conjecture}

\newcommand{\PP}{\mathcal{P}}
\newcommand{\ord}{\mathrm{ord}}

\newcommand{\RR}{\mathcal{R}}
\newcommand{\PPP}{\mathbf{1}_{\mathcal{P}}}

\title{Positive density for Sun's $2^k+m$ conjecture
}

\author{Songlin Han}
\address{Graduate School of Mathematics, Kyushu University, Fukuoka, Japan.}
\email{han.songlin.638@s.kyushu-u.ac.jp}

\author{Jinbo Yu}
\address{Graduate School of Mathematics, Nagoya University, Nagoya, Japan.}
\email{jinbo.yu.e6@math.nagoya-u.ac.jp}

\subjclass[2020]{Primary 11N32; Secondary 11N36, 11P32}

\date{\today}

\begin{document}

\begin{abstract}
  In 2013, Zhi-Wei Sun proposed a Romanov-type conjecture stating that every integer $n > 1$ can be written as $n = k + m$ with $k, m \ge 1$ such that $2^k + m$ is a prime. In this paper, we unconditionally prove that the natural numbers satisfying this property have a positive density. We compute this density to be at least $0.0734 $. We also discuss the limitations of our method. Under a uniform Hardy-Littlewood prime pairs conjecture, we show that the lower bound of density obtained by this method cannot exceed $1/(\log 2 + 1) \approx 0.5906$.
\end{abstract}

\maketitle

\section{Introduction}\label{sec:int}

Representation of integers as sums involving primes and powers of $2$ is a classical problem in additive number theory. In 1934, Romanov \cite{Romanov1934} proved that there is a positive density of natural numbers which can be expressed as the sum of a prime and a power of $2$. Let $\mathcal{P}$ be the set of all prime numbers.
\begin{thmA}[{\cite{Romanov1934}}, see also {\cite[Lemma 7.11]{Nathanson1996}}]
  Let $a \ge 2$ be an integer. Let
  \begin{equation*}
    A = \{y\geq 1\mid y=p+a^k \text{ for some } p\in \mathcal{P} \text{ and } k\ge 1\}
  \end{equation*}
  and $A(x) :=\# \{y \in A : y \le x\}$ be the counting function of $A$.
  There exists $c > 0$ such that for all sufficiently large $x$,
  \begin{equation*}
    A(x)>cx.
  \end{equation*}
\end{thmA}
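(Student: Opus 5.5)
The plan is the classical Cauchy--Schwarz (second moment) argument. For $n\ge 1$ let $r(n)=\#\{(p,k): p\in\PP,\ k\ge 1,\ p+a^k=n\}$. Write $L=\lfloor \log(x/2)/\log a\rfloor$, and consider pairs with $p\le x/2$ and $1\le k\le L$, so that $p+a^k\le x$. Then
\[
\sum_{n\le x} r(n)\ \ge\ \pi(x/2)\,L \gg \frac{x}{\log x}\cdot \log x \asymp x .
\]
Cauchy--Schwarz gives
\[
\Big(\sum_{n\le x} r(n)\Big)^2\ \le\ A(x)\sum_{n\le x} r(n)^2 .
\]
It therefore suffices to prove $\sum_{n\le x} r(n)^2 \ll x$, which yields $A(x)\ge c x$ for large $x$.

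To bound the second moment, expand $r(n)^2$. It counts quadruples $(p,k,q,l)$ with $p+a^k=q+a^l\le x$. The diagonal terms $k=l$ force $p=q$ and contribute at most $\sum_n r(n)\ll x$. For $k\ne l$, say $k>l$, we get $q-p=a^k-a^l=a^l(a^{k-l}-1)$. So for fixed $l<k\le \log x/\log a$ we must count primes $p\le x$ with $p+h$ also prime, where $h=a^l(a^{j}-1)\neq 0$ and $j=k-l$. The upper-bound sieve (Selberg or Brun), which I take as a standard input, gives
\[
\#\{p\le x: p+h\in\PP\}\ \ll\ \frac{x}{\log^2 x}\prod_{p\mid h}\Big(1+\frac1p\Big)\ \ll\ \frac{x}{\log^2 x}\sum_{d\mid h}\frac{\mu^2(d)}{d}.
\]
The factor $a^l$ contributes only the bounded product $\prod_{p\mid a}(1+1/p)$, so we may replace $h$ by $a^j-1$. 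Summing over the at most $\log x/\log a$ choices of $l$ and over $j\le \log x/\log a$, the off-diagonal part is
\[
\ll \frac{x}{\log x}\sum_{j\le \log x/\log a}\ \sum_{d\mid a^j-1}\frac{\mu^2(d)}{d}.
\]

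The main obstacle is to show that this inner sum is $O(\log x)$, i.e.\ that $\sum_{d\mid a^j-1}1/d$ is bounded on average over $j$. Interchanging the sums, we get
\[
\sum_{j\le J}\sum_{d\mid a^j-1}\frac{\mu^2(d)}{d} \ \le\ \sum_{\substack{d\ge1\\ (d,a)=1}}\frac{1}{d}\cdot\frac{J}{\ord_d(a)} .
\]
This holds because $d\mid a^j-1$ exactly when $\ord_d(a)\mid j$, and $d\le a^J$ automatically. So we need $\sum_{d}\frac{1}{d\,\ord_d(a)}<\infty$. Group the $d$ by $e=\ord_d(a)$. Every such $d$ divides $a^e-1$, so
\[
\sum_{\ord_d(a)=e}\frac1d\ \le\ \sum_{d\mid a^e-1}\frac1d=\frac{\sigma(a^e-1)}{a^e-1}\ \ll\ \log\log(a^e)\ \ll\ \log e .
\]
The series is therefore at most $\sum_e \frac{\log e}{e}\cdot(\text{weight})$, and this form diverges, so the grouping must be refined. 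The standard trick is to restrict to $d$ with $\ord_d(a)=e$, which forces $d\le a^e$ and $d\mid a^e-1$, and to bound $\sum_{\ord_d(a)=e}1/d$ by splitting at $d\le e^2$ versus $d>e^2$. Divisors $d>e^2$ of $a^e-1$ number at most $\tau(a^e-1)$, a quantity of size $a^{o(e)}$, and this part also needs the additional $1/e$ factor to be summable. Small $d\le e^2$ contribute at most $\sum_{d\le e^2}1/d\ll \log e$, but summing over $e$ with weight $1/e$ diverges. So the right approach is to sum over $d$ directly: write $\sum_d \frac{1}{d\,\ord_d(a)}$, use $\ord_d(a)\ge \log d/\log a$, and bound $\#\{d\le y:\ord_d(a)\le z\}\le \sum_{e\le z}\tau(a^e-1)$. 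Making $z$ a small power of $\log y$ and applying dyadic decomposition in $d$ then gives convergence. This is Romanov's lemma, and I expect it to be the technical heart. With it, the second moment is $\ll x$ and the theorem follows.
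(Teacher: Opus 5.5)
\textbf{There is a genuine gap: the convergence of the Romanov series is not proved, and the route you sketch for it fails.}

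Everything up to that point is correct and is the classical argument. You use Cauchy--Schwarz and a first moment $\gg x$. You apply the upper-bound sieve to $p,\,p+a^l(a^j-1)$, discard the factor $a^l$, and interchange sums. This reduces the problem to $\sum_{(d,a)=1}\mu^2(d)/(d\,\ord_d(a))<\infty$. The paper uses the same skeleton for its $2^k+m$ problem, where the analogous series (with $a=2$ and weights $g(d)$ in place of $1/d$) appears as $C_{\mathrm{Rom}}$. The paper does not prove Theorem A; it cites it, and it also cites the convergence of that series from Nathanson.

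Your final sketch fails as follows. In a dyadic block $y\le d<2y$, your scheme bounds the $d$ with $\ord_d(a)>z$ only by $\frac1z\sum_{y\le d<2y}\frac1d\ll\frac1z$. With $z=(\log y)^{\theta}$, $\theta<1$, summing over $y=2^m$ gives $\sum_m m^{-\theta}=\infty$. To make $\sum_m 1/z(2^m)$ finite, $z$ must exceed $\log y\log\log y$ by an unbounded factor. In that range the divisor bound $\tau(a^e-1)\le\exp(O(e/\log e))$ no longer keeps $\sum_{e\le z}\tau(a^e-1)$ small compared with $y$, so you lose control of the small-order part. Counting divisors is the wrong tool here.

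The missing idea is to bound the \emph{cumulative} harmonic sum instead. Let $F(z)=\sum_{\ord_d(a)\le z}\mu^2(d)/d$. Every such $d$ divides the single number $P_z=\prod_{e\le z}(a^e-1)<a^{z^2}$, so
\[
F(z)\le\prod_{p\mid P_z}\Bigl(1+\frac1p\Bigr)\ll\log\log P_z\ll_a\log z\qquad(z\ge 2).
\]
Your pointwise bound $\sum_{\ord_d(a)=e}1/d\ll\log e$ is wasteful because it is attained only for rare $e$. On average over $e\le z$ this sum is $F(z)/z\ll(\log z)/z$. Partial summation, with $F(0)=0$ and $F(E)/E\to 0$, then gives
\[
\sum_{(d,a)=1}\frac{\mu^2(d)}{d\,\ord_d(a)}=\sum_{e\ge1}\frac{F(e)-F(e-1)}{e}\le\sum_{e\ge1}\frac{F(e)}{e^2}\ll_a\sum_{e\ge1}\frac{\log(e+1)}{e^2}<\infty .
\]
With this lemma, or simply by citing it as the paper does, the rest of your argument goes through.
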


In what follows, the letter $p$ always denotes a prime, that is, $p\in \mathcal{P}$.
Finding an explicit lower bound on this density has become an important problem. In 2004, Yong-Gao Chen and Xue-Gong Sun \cite{ChenSun2004} proved $c>0.0869$ by studying the related infinite series and gave a very important explicit estimate for its tail sum. Following their foundational work, in 2006, Pintz \cite{pintz2006note} made a significant improvement on this density, showing that $c>0.09367$. Meanwhile, Jie Wu \cite{Wu2004} improved the upper bounds for prime pairs by refining Chen's double sieve. Based on Wu's result, Elsholtz and Schlage-Puchta \cite{Elsholtz2014} later obtained a better lower bound $c\geq 0.1076$.

However, showing that such sets have positive density is distinct from proving that a representation holds for all sufficiently large integers. In 1950, Erd\H{o}s \cite{erdos1950integers} used covering systems to prove that a positive proportion of odd numbers cannot be written in the form $p+2^k$. This result was later generalized by Yong-Gao Chen. Similarly, Crocker \cite{crocker1971} showed that there are infinitely many odd integers that cannot be written as $p+2^a+2^b$, and Zhi-Wei Sun \cite{sun2000integers} further proved that a positive proportion of integers cannot be represented in the more general form $c(2^a+2^b)+p$. These examples show the difficulties in the study of sums of a prime and powers of an integer. The upper density of integers representable as $p+2^k$ has then been extensively studied; for instance, Habsieger and Roblot \cite{Habsieger2006} proved that the upper bound of the density is at most $0.49095$.

To avoid the issue caused by covering systems, Zhi-Wei Sun \cite{sun2014anbnmodulom} proposed the following modified Romanov-type conjecture in 2013.

\begin{conjA}[Sun's $2^k+m$ conjecture, \cite{sun2014anbnmodulom}]
\label{conj:sun}
For all integers $n>1$, there exist integers $k \ge 1$ and $m \ge 1$ such that $n = m + k$ and $2^k + m$ is a prime.
\end{conjA}

Sun numerically verified that this conjecture holds for all $n \le 10^7$ (see \cite{OEIS,sun2014anbnmodulom}). Although a complete proof of Conjecture~\ref{conj:sun} for all integers remains out of reach, its Romanov-type nature suggests that the set of integers satisfying this property should at least possess a positive lower density.

To investigate this density via the second-moment method, one inevitably encounters the distribution of prime pairs $p$ and $p+d$, where the difference $d$ belongs to a specific sparse sequence. A central object of this study is the average behavior of the singular series associated with the Hardy-Littlewood $k-$tuple conjecture \cite[p.14]{Broughan2021}. Recall that for an even integer $d$, the singular series is defined as
\begin{equation*}
    \mathfrak{S}(d) := 2C_2 \prod_{\substack{p|d\\p>2}}\left(1+\frac{1}{p-2}\right),
\end{equation*}
where $C_2 = \prod_{p\ge 3}(1-1/(p-1)^2)$ is the twin prime constant. 

In our context, we let $K= \lfloor\log_2 N - 2\log _2(\log N)\rfloor$, $d(k_2,h) = 2^{k_2}(2^h - 1 )- h$, and we define the corresponding mean value function $\Phi(N)$ over our specific index set as follows:
\begin{equation*}
    \Phi(N) := 
    \frac{1}{K^2}
    \sum_{\substack{1\le h \le K -1\\h \equiv 0 \bmod 2 }}\; 
    \sum_{1\le k_2\le K-h} \mathfrak{S}(d(k_2,h)).
\end{equation*}

The study of average values of the singular series is a classic topic in additive number theory. For instance, Gallagher \cite{Gallagher1976} proved that the average of $\mathfrak{S}(d)$ over consecutive integers in $[1, X]$ is asymptotically $1$. However, evaluating $\Phi(N)$ involves averaging over a double-indexed, exponentially sparse sequence, which presents unique arithmetic challenges related to the multiplicative order of $2$. We establish the following bounds:

\begin{theorem}\label{thm:phi_bounds}
The mean value function $\Phi(N)$ satisfies the following bounds:
\begin{equation*}
  \frac{1}{2} \le \liminf_{N\to\infty} \Phi(N) \le \limsup_{N\to\infty} \Phi(N) \le \frac{1}{2}\left( C_{\mathrm{Rom}} + e^{\gamma}\log 2 \right),
\end{equation*}
where $C_{\mathrm{Rom}}$ is the Romanov-type constant defined in \eqref{eq:CRom}, and $\gamma$ is the Euler constant.
\end{theorem}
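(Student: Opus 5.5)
We expand the singular series as a divisor sum and average it over the index set by exploiting the periodicity of $d(k_2,h)$ modulo each odd squarefree $q$. Writing $\varphi_2(q)=\prod_{p\mid q}(p-2)$, we have for even $d$
\begin{equation*}
\mathfrak{S}(d)=2C_2\sum_{\substack{q\mid d\\ q \text{ odd}}}\frac{\mu^2(q)}{\varphi_2(q)} .
\end{equation*}
Substituting this into $\Phi(N)$ and interchanging sums gives
\begin{equation*}
\Phi(N)=\frac{2C_2}{K^2}\sum_{q \text{ odd}}\frac{\mu^2(q)}{\varphi_2(q)}\,\#\{(k_2,h): q\mid d(k_2,h)\}.
\end{equation*}
Here $h$ is even with $1\le h\le K-1$, and $1\le k_2\le K-h$. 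The admissible region has $\sim K^2/4$ points.

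\textbf{Local density for a prime.} Fix an odd prime $p$ and let $t=\ord_p(2)$. Then $d(k_2,h)\bmod p$ depends only on $k_2 \bmod t$ and $h \bmod pt$. I will count the residue pairs in two cases.
\begin{itemize}
\item If $t\mid h$, then $2^h-1\equiv 0$, so $p\mid d$ iff $p\mid h$. This happens with probability $1/(pt)$ for every $k_2$.
\item If $t\nmid h$, then $k_2\mapsto 2^{k_2}(2^h-1)$ runs over a coset of $\langle 2\rangle$ of size $t$. The congruence $2^{k_2}(2^h-1)\equiv h$ is solvable iff $h(2^h-1)^{-1}\in\langle 2\rangle$, and then it has exactly one solution $k_2 \bmod t$. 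For fixed $h\bmod t$ the residue $h\bmod p$ is uniform, so the solvability probability is $t/p$. This case contributes $(1-1/t)(t/p)(1/t)$.
\end{itemize}
The total is exactly $1/p$. The restriction to even $h$ only changes the modulus by a factor $2$, which is harmless since $p$ is odd. I expect the same argument to extend to squarefree $q$, giving density $\rho(q)=1/q$. Combined $p$-adic and order periodicity takes place modulo $q\cdot\mathrm{lcm}_{p\mid q}\ord_p(2)$, and one checks the count by the same case split prime-by-prime inside that period.

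\textbf{Lower bound.} All terms are nonnegative, so I truncate at $q\le z$ with $z$ fixed. For each $q\le z$ the period $q\,\ord_q(2)$ is bounded, so the counting function equals $(K^2/4)(1/q)+O_z(K)$. Letting $N\to\infty$ and then $z\to\infty$ gives
\begin{equation*}
\liminf\Phi(N)\ge \frac{2C_2}{4}\prod_{p>2}\Bigl(1+\frac{1}{p(p-2)}\Bigr).
\end{equation*}
Since $(1-(p-1)^{-2})(1+1/(p(p-2)))=1$, the product equals $1/C_2$, and the bound is $1/2$.

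\textbf{Upper bound (main obstacle).} Now the tail $q>z$ must be controlled, and periodicity is useless once the period $q\,\ord_q(2)$ exceeds $K$. I will split the prime divisors of $d$ into two classes, bounding $\mathfrak{S}(d)\ll\prod_{p\mid d}p/(p-1)$ in the second.
\begin{itemize}
\item Primes $p$ with $p\,\ord_p(2)\le K^{1-\epsilon}$. For $q$ composed of these, equidistribution still gives $\rho(q)\le (1+o(1))/q$, uniformly enough to sum. The resulting main term, together with the sum over such $q$ weighted by $1/\ord_q(2)$ arising from the exact-divisibility case $\ord_q(2)\mid h$, should produce the Romanov-type constant $C_{\mathrm{Rom}}$ of \eqref{eq:CRom}. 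This is the analogue of the Chen--Sun/Pintz series $\sum \mu^2(q)/(q\,\ord_q(2))$.
\item The remaining large primes dividing $d$. Since $d\le 2^{K}$, there are $O(K/\log K)$ of them, each at least $K^{1-\epsilon}$. By Mertens, their product $\prod p/(p-1)$ is at most $1+o(1)$. The one source of loss is primes dividing $2^h-1$ and $h$ simultaneously, which lie outside the equidistribution regime. For these I would bound $\prod_{p\mid 2^h-1}p/(p-1)$ by its maximal order $e^{\gamma}\log\log 2^h\approx e^{\gamma}\log 2\cdot(\log h/\log 2)$, averaged over $h$. This step is where the $e^{\gamma}\log 2$ should appear. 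Making the two contributions add rather than multiply, giving $\frac12(C_{\mathrm{Rom}}+e^\gamma\log 2)$, is the delicate point.
\end{itemize}
I would first try to prove the upper bound by a Rankin-type/Chen--Sun tail estimate separating $q\mid\gcd(d,2^h-1)$ from $q\nmid 2^h-1$.
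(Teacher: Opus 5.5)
Your lower-bound argument matches the paper's: local density exactly $1/p$ from the case split on whether $\ord_p(2)$ divides $h$, extension to squarefree moduli, truncation at a fixed level, and $\prod_{p>2}\bigl(1+\frac{1}{p(p-2)}\bigr)=1/C_2$. The extension to squarefree $q$ is not pure CRT, since all primes share $k_2$ and $h \bmod \ord_p(2)$. It does go through: the largest prime $P\mid q$ divides no $\ord_p(2)$ with $p\mid q$, so conditioned on everything else the $P$-condition is one residue class of $h \bmod P$, and you induct.

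The upper bound, however, is not proved. You yourself leave the decisive step open, and the sketch would not work as written.
\begin{itemize}
\item Bounding $\prod_{p\mid 2^h-1}p/(p-1)$ by its maximal order gives $e^{\gamma}\log\log 2^h\approx e^{\gamma}\log h$. This is of size $\log K$ on average over $h\le K$, so it cannot give a bounded $\limsup$, let alone the constant $e^{\gamma}\log 2$.
\item The class $p\,\ord_p(2)>K^{1-\epsilon}$ contains primes as small as about $K^{(1-\epsilon)/2}$. So the pointwise Mertens bound for their contribution is about $2$, not $1+o(1)$.
\item Equidistribution for $q$ built from small primes fails once $q$ itself is large, and that tail is never treated.
\item An equidistribution main term would produce $\sum\mu^2(q)/(q\varphi_2(q))=1/C_2$, not $C_{\mathrm{Rom}}$. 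So your account of where $C_{\mathrm{Rom}}$ comes from is not coherent.
\end{itemize}

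The missing idea is that the upper bound needs no averaging in $h$ at all. Fix even $h$ and odd squarefree $l$, and put $\gamma=\gcd(l,2^h-1)$ and $\beta=l/\gamma$. Then $l\mid d(k_2,h)$ forces $\gamma\mid h$. Modulo $\beta$, which is coprime to $2^h-1$, it says $2^{k_2}\equiv h(2^h-1)^{-1}$, a single class of $k_2$ modulo $\ord_\beta(2)$. Hence the number of $k_2\le K-h$ with $l\mid d$ is at most $(K-h)/\ord_\beta(2)+1$. Writing $f(d)=\mathfrak{S}(d)/(2C_2)$ and using multiplicativity of $1/\varphi_2$, this gives
\[\sum_{k_2\le K-h} f(d)\le f(h)\,(K-h)\,C_{\mathrm{Rom}}+\sum_{l\le 2^K,\ 2\nmid l}\frac{\mu^2(l)}{\varphi_2(l)}.\]
Only $l\le 2^K$ matter here because $d<2^{k_2+h}\le 2^K$.

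Now sum over even $h$. For the first term, $\sum_{j\le X}f(2j)\le X/C_2$ and partial summation give $K^2C_{\mathrm{Rom}}/(4C_2)$. The second term is at most $\prod_{2<p\le 2^K}\frac{p-1}{p-2}\sim\frac{e^{\gamma}}{2C_2}K\log 2$ by Mertens' third theorem, and there are about $K/2$ values of $h$. The two pieces add because they are the main term and the ``$+1$'' boundary error of one and the same count. Multiplying by $2C_2/K^2$ gives $\frac12(C_{\mathrm{Rom}}+e^{\gamma}\log 2)$. So $e^{\gamma}\log 2$ comes from Mertens' product up to $2^K$, not from a maximal-order bound over divisors of $2^h-1$.
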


By applying explicit estimates from the Selberg upper bound sieve and Theorem~\ref{thm:phi_bounds}, we refine the second-moment method to obtain a positive lower bound on the density as follows.

\begin{theorem}\label{thm:main}
Let $ \RR(N):= \bigl\{n \leq N : n = m + k \text{ and } 2^k + m \in \PP \text{ for some } k\ge 1,m\ge 1\bigr\}$.
For all sufficiently large $N$, we unconditionally have
\begin{equation*}
    \bigl|\RR(N)\bigr| \ge
    \left(
      \frac{1}{\log 2+8\Phi(N)} +o(1)
    \right)
    N.
\end{equation*}
In particular, there exists an absolute constant $\delta \ge 0.0734 $ such that $\bigl|\RR(N)\bigr| \geq \delta N$, meaning a positive proportion of natural numbers satisfy Sun's conjecture.
\end{theorem}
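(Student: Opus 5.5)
The plan is the standard Romanov second-moment argument, with $\Phi(N)$ controlling the second moment. Let $K=\lfloor\log_2 N-2\log_2\log N\rfloor$. For $n\le N$ define
\begin{equation*}
  r(n):=\#\{(k,p): 1\le k\le K,\ p\in\PP,\ p=2^k+(n-k),\ n-k\ge 1\}.
\end{equation*}
Then $n\in\RR(N)$ whenever $r(n)>0$, and Cauchy--Schwarz gives
\begin{equation*}
  |\RR(N)|\ \ge\ \frac{\bigl(\sum_{n\le N} r(n)\bigr)^2}{\sum_{n\le N} r(n)^2}.
\end{equation*}
The bound $2^K\le N/(\log N)^2$ makes each prime $p=2^k+n-k$ lie in $(2^k-k,\,N+2^k]$. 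This interval is essentially $(0,N]$ up to negligible error.

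For the first moment, fix $k\le K$ and substitute $p=2^k+n-k$. Then $n$ ranges over an interval of length $N$, so the number of admissible $n$ equals $\pi(N+2^k-k)-\pi(2^k-k)=(1+o(1))N/\log N$. Summing over $k$ gives
\begin{equation*}
  \sum_n r(n)=(1+o(1))\frac{KN}{\log N}=(1+o(1))\frac{N}{\log 2}.
\end{equation*}

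For the second moment, split $\sum_n r(n)^2$ into diagonal and off-diagonal parts. The diagonal terms $k_1=k_2$ contribute $\sum_n r(n)=(1+o(1))N/\log 2$.

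For the off-diagonal terms, take $k_1=k_2+h$ with $h\ge1$ and count the factor $2$ for order. A pair of primes $p_1=2^{k_2+h}+n-k_2-h$ and $p_2=2^{k_2}+n-k_2$ satisfies $p_1-p_2=d(k_2,h)=2^{k_2}(2^h-1)-h$.
\begin{itemize}
  \item If $h$ is odd, then $d$ is odd and one of the two primes must be $2$. These terms contribute $O(K^2)$.
  \item If $h$ is even, apply the Selberg upper bound sieve for prime pairs with difference $d$. In the explicit form (Wu's constant, or the classical constant $4$ times the singular series) it gives
  \begin{equation*}
    \#\{p\le X: p+d\in\PP\}\ \le\ (4+o(1))\,\mathfrak{S}(d)\,\frac{X}{\log^2 X},
  \end{equation*}
  uniformly for $d\le X$. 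Here $d\le 2^K\ll N$, so this applies with $X\asymp N$.
\end{itemize}
The off-diagonal contribution is therefore at most
\begin{equation*}
  2\cdot 4\,\frac{N}{\log^2 N}\sum_{h\text{ even}}\ \sum_{k_2\le K-h}\mathfrak{S}(d(k_2,h))\,(1+o(1))
  \;=\;8\,\Phi(N)\,\frac{K^2N}{\log^2 N}\,(1+o(1))
  \;=\;\frac{8\Phi(N)N}{\log^2 2}\,(1+o(1)).
\end{equation*}
Uniformity in $d$ is fine because the sieve bound's error term is uniform in $d\le X$, and the number of pairs is $O(K^2)$. Combining the two moments,
\begin{equation*}
  |\RR(N)|\ \ge\ \frac{N^2/\log^2 2}{N/\log 2+8\Phi(N)N/\log^2 2}\,(1+o(1))
  \;=\;\Bigl(\frac{1}{\log 2+8\Phi(N)}+o(1)\Bigr)N.
\end{equation*}
This is the claimed inequality.

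The numerical constant follows by inserting the upper bound $\limsup\Phi(N)\le\frac12(C_{\mathrm{Rom}}+e^\gamma\log 2)$ from Theorem~\ref{thm:phi_bounds}. Using the numerical value of $C_{\mathrm{Rom}}$, presumably computed with Chen--Sun-type tail estimates, should give $\delta\ge 0.0734$.

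The main obstacle is choosing the explicit sieve constant and checking that it matches the factor $8$. The factor $8$ requires the constant $4\mathfrak{S}(d)$ with the normalisation of $\mathfrak{S}$ that includes $2C_2$. If Wu's sharper constant were available, it would improve $8$. I would therefore first verify exactly which explicit Selberg constant is uniform in $d$, so that the $o(1)$ can be absorbed.
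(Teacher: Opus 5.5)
Your proof of the displayed inequality is the paper's argument. It uses the same truncation $K$, Cauchy--Schwarz, the first moment $(1+o(1))N/\log 2$, parity to discard odd $h$, and the Selberg bound with constant $4\mathfrak{S}(d)$. In your normalisation $2^k+m\ge 3$, so the odd-$h$ terms are in fact $0$.

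The point you flag as the main obstacle is settled by Halberstam--Richert, Theorem 3.11. It gives $8C_2\prod_{p\mid d,\,p>2}\frac{p-1}{p-2}\cdot\frac{x}{\log^2 x}\bigl(1+O\bigl(\frac{\log\log x}{\log x}\bigr)\bigr)$ uniformly in even $d$. This is exactly $4\mathfrak{S}(d)\,x/\log^2 x$, so the factor $8$ is right. A sharper sieve constant would only strengthen the stated bound.

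The gap is the numerical clause. Saying the value of $C_{\mathrm{Rom}}$ ``should give'' $\delta\ge 0.0734$ does not prove it, and the margin is too thin to leave unchecked:
\begin{itemize}
\item From $\limsup_{N\to\infty}\Phi(N)\le\frac12(C_{\mathrm{Rom}}+e^{\gamma}\log 2)$, you need $\log 2+4C_{\mathrm{Rom}}+4e^{\gamma}\log 2<1/0.0734\approx 13.624$.
\item This means roughly $C_{\mathrm{Rom}}<1.998$.
\item The terms $\beta=1,3,5,7,15,21$ alone already contribute about $1.77$.
\end{itemize}

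The paper supplies the missing bound by splitting $C_{\mathrm{Rom}}$ according to whether $\ord_\beta(2)\le M$ or $\ord_\beta(2)>M$, with $M=240$:
\begin{itemize}
\item The finite part is computed exactly, following Elsholtz--Schlage-Puchta, and is $<1.9328$.
\item The tail is at most $2.7961\log M/M<0.0639$ by Chen--Sun's tail estimate.
\item Together these give $C_{\mathrm{Rom}}<1.9967$.
\end{itemize}
Then $S_2(N)\le C_{\text{upper}}N$ with $C_{\text{upper}}<28.3446$, and $\delta\ge(1/\log 2)^2/28.3446>0.0734$. To reach the stated constant, you need this explicit bound (or an equivalent one) together with the arithmetic.
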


While Theorem~\ref{thm:main} unconditionally provides a positive lower bound, it is still far from the expected density $\delta=1$. To find an optimal bound achievable by this method, we assume a stronger form of the following Hardy-Littlewood prime pairs conjecture.
\begin{conjA}[Hardy-Littlewood prime pairs, \cite{Broughan2021}]
    Let $d\geq2$ be even. Then as $x\to\infty$, we have
    \begin{align}\label{eq:HLpp}
        \pi_d(x):=\sum_{\substack{p\leq x\\p-p'=d}}1=2C_2\prod_{\substack{p|d\\p>2}}\left(1+\frac{1}{p-2}\right)\frac{x}{(\log x)^2}\left(1+o\left(\frac{1}{(\log\log x)^2}\right)\right)
    \end{align}
    uniformly for $2\leq d\leq (\log x)^2$.
\end{conjA}

\begin{theorem}\label{thm:conditional}
  Assume that \eqref{eq:HLpp} for $\pi_d(x)$ holds uniformly for all even $d \le x$ as $x \to \infty$. Then, for all sufficiently large $N$, we have
  \begin{equation*}
    \bigl|\RR(N)\bigr| \ge
    \left(
      \frac{1}{\log 2+2\Phi(N)} +o(1)
    \right)
    N.
  \end{equation*}
  Consequently, combining this with the lower limit $\liminf_{N\to\infty}\Phi(N) \ge 1/2$, the bound on the lower density obtained by this method cannot exceed $\frac{1}{\log 2+1} \approx 0.5906$.
\end{theorem}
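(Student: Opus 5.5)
We follow the second-moment (Cauchy--Schwarz) argument behind Theorem~\ref{thm:main}, replacing the Selberg sieve upper bound for prime pairs by the assumed asymptotic \eqref{eq:HLpp}. For $n\le N$ let
\begin{equation*}
r(n):=\#\{(k,m): 1\le k\le K,\ m\ge 1,\ k+m=n,\ 2^k+m\in\PP\},
\end{equation*}
with $K=\lfloor\log_2 N-2\log_2\log N\rfloor$ as before. Cauchy--Schwarz gives
\begin{equation*}
|\RR(N)|\ge \frac{\bigl(\sum_{n\le N} r(n)\bigr)^2}{\sum_{n\le N} r(n)^2}.
\end{equation*}
The plan is to show $\sum_n r(n)=(1+o(1))KN/\log N$ and
\begin{equation*}
\sum_n r(n)^2\le (1+o(1))\frac{K^2N}{(\log N)^2}\Bigl(\log 2+2\Phi(N)\Bigr).
\end{equation*}
Since $K\sim \log N/\log 2$, the first moment squared divided by this bound is $(1+o(1))N/(\log 2+2\Phi(N))$, which is the claim.

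\emph{First moment.} Write $p=2^k+m$, so that $n=p-2^k+k$. Then
\begin{equation*}
\sum_n r(n)=\sum_{k\le K}\#\{p: 2^k<p\le N-k+2^k\}.
\end{equation*}
Because $2^k\le N/(\log N)^2$, each inner count is $(1+o(1))N/\log N$ by the prime number theorem. This step is unconditional and identical to the one used for Theorem~\ref{thm:main}.

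\emph{Second moment.} Expand $\sum_n r(n)^2$ as a sum over pairs $k_1,k_2\le K$ with primes $p_1=2^{k_1}+n-k_1$ and $p_2=2^{k_2}+n-k_2$.
\begin{itemize}
\item The diagonal $k_1=k_2$ recovers the first moment, $\sim KN/\log N=(\log 2+o(1))K^2N/(\log N)^2$. This gives the $\log 2$ term.
\item For the off-diagonal terms, set $k_1=k_2+h$ with $h\ge1$, counted twice by symmetry. The difference is
\begin{equation*}
p_1-p_2=2^{k_2}(2^h-1)-h=d(k_2,h).
\end{equation*}
\item When $h$ is odd, $d$ is odd, so at most $O(1)$ pairs occur and the contribution is negligible.
\item When $h$ is even, the count is $\pi_d(x)$ for $x\approx N$ and $d\le 2^K\le N$. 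This is where the strengthened uniform hypothesis (all even $d\le x$) is needed.
\end{itemize}
Applying \eqref{eq:HLpp} gives each term as $\mathfrak{S}(d)\,N/(\log N)^2\,(1+o((\log\log N)^{-2}))$, with the endpoint shift from $x=N$ to $N+O(2^K)$ absorbed into the error. Summing over $1\le h\le K-1$ even and $1\le k_2\le K-h$ gives exactly $2K^2\Phi(N)N/(\log N)^2$ up to a relative error $o(1)$.

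\emph{Error control (main obstacle).} The $o((\log\log x)^{-2})$ relative errors are summed over only $O(K^2)$ terms, and $\Phi(N)$ is bounded by Theorem~\ref{thm:phi_bounds}, so the total error is $o(K^2N/(\log N)^2)$. I would therefore check two points. First, uniformity must indeed hold over the whole range $d\le 2^K$. Second, the exceptional pairs where $d(k_2,h)$ is small or nonpositive must contribute negligibly. Since $d$ is increasing in $k_2$ and $h$, only $h=2$, $k_2=1$ gives $d=4$, which is harmless.

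\emph{Final remark.} Taking $\liminf_{N\to\infty}$ and using $\liminf\Phi(N)\ge1/2$ from Theorem~\ref{thm:phi_bounds}, the expression $1/(\log 2+2\Phi(N))$ is eventually at most $1/(\log 2+1)+o(1)$. Because the second moment is asymptotically exact under the hypothesis, no sharper constant can come out of this Cauchy--Schwarz method. This gives the stated limitation.
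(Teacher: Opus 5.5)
Your proposal is correct and takes essentially the same approach as the paper. Both arguments use Cauchy--Schwarz, get the first moment from the prime number theorem, take the $\log 2$ term from the diagonal of the second moment, discard odd $h$ by parity, and apply the uniform Hardy--Littlewood asymptotic to each even-$h$ pair so that the off-diagonal part becomes $2K^2\Phi(N)N/(\log N)^2\,(1+o(1))$, then use $\liminf_{N\to\infty}\Phi(N)\ge 1/2$ for the limitation. Your variations are cosmetic: you sum $r(n)$ over all $n\le N$ with $k\le\min(K,n-1)$, and you need only an upper bound on each pair count for the inequality itself.
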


\section{Preparation}

By the relation $n = m+ k$, we first rewrite
\begin{equation*}
  2^k + m = 2^k - k + n .
\end{equation*}
If we define $c_k : = 2^k - k$, then clearly $c_{k+1}-c_k = 2^{k} - 1\ge 1$, so $\{c_k\}_{k\ge 1}$ is an increasing sequence.
Conjecture~\ref{conj:sun} says that there is $k$ with $1\le k \le n-1$ such that
\begin{equation*}
  2^k+(n-k) = n+c_k
\end{equation*}
is a prime.

Let $K=K(N):= \lfloor\log_2 N - 2\log _2(\log N)\rfloor$ for sufficiently large $N$.
This choice ensures $c_k\le 2^K\le N/(\log N)^2$ for all $1\le k\le K$.
Now, we define
\begin{equation*}
  r(n):=\sum_{1\le k\le K}\PPP(n + c_k)
\end{equation*}
for integers $n$ with $K+1\le n\le N$, where
\begin{equation*}
  \PPP(n):=
  \begin{cases}
    1&\text{if }n\in\PP,\\
    0&\text{if }n\notin\PP,
  \end{cases}
\end{equation*}
and so
\begin{equation*}
  \RR^{\ast}(N)= \{K+1\le n\le N: r(n)\ge 1\},
\end{equation*}
where $\RR^{\ast}(N)$ is a subset of $\RR(N)$ defined in Section 1, which counts the numbers satisfying Conjecture~\ref{conj:sun} in the range $[K+1,N]$.
Define
\begin{equation*}
  S_1(N):=\sum_{K+1\le n \le N}r(n),\quad S_2(N):=\sum_{K+1\le n \le N}r^2(n),
\end{equation*}
then by Cauchy-Schwarz inequality, we have
\begin{equation}\label{Cauchy-Schwarz}
  \bigl|\RR(N)\bigr|\ge\bigl|\RR^{\ast}(N)\bigr|\ge \frac{S_1(N)^2}{S_2(N)}.
\end{equation}
Our idea is to prove Theorem~\ref{thm:main} via the following lemma.
\begin{lemma}\label{prop:main}
  There exist absolute constants $c_1>0$ and $c_2>0$ such that $S_1(N)\ge c_1 N$, and $S_2(N)\le c_2 N$ for sufficiently large $N$.
\end{lemma}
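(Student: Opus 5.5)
For the first moment I swap the order of summation and use the prime number theorem. We have
\begin{equation*}
  S_1(N)=\sum_{1\le k\le K}\bigl(\pi(N+c_k)-\pi(K+c_k)\bigr).
\end{equation*}
Since $c_k\le 2^K\le N/(\log N)^2$, each summand equals $\bigl(1+o(1)\bigr)N/\log N$, uniformly in $k$. With $K=(1+o(1))\log N/\log 2$ this gives
\begin{equation*}
  S_1(N)=\bigl(1+o(1)\bigr)\frac{N}{\log 2},
\end{equation*}
so any $c_1<1/\log 2$ works.

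For the second moment I separate the diagonal from the off-diagonal terms:
\begin{equation*}
  S_2(N)=S_1(N)+2\sum_{1\le k_1<k_2\le K}\#\{K+1\le n\le N:\ n+c_{k_1},\,n+c_{k_2}\in\PP\}.
\end{equation*}
In each off-diagonal term the two primes differ by $d=c_{k_2}-c_{k_1}$. Writing $k_2=k_1+h$, this is $d=2^{k_1}(2^h-1)-h$, which is exactly the quantity $d(k_1,h)$ from the introduction. If $h$ is odd, then $d$ is odd, so one of the two numbers is even. Hence only $O(1)$ values of $n$ contribute for each pair, and the total over odd $h$ is $O(K^2)=o(N)$. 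For even $h$ I apply the explicit Selberg upper bound sieve for prime pairs with difference $d\le N$:
\begin{equation*}
  \#\{p\le x:\ p+d\in\PP\}\le (4+o(1))\,\mathfrak{S}(d)\frac{x}{(\log x)^2},
\end{equation*}
uniformly in $d$. The factor $4$ is the constant from Selberg's sieve for a two-dimensional problem. Taking $x\approx N$ and summing over the admissible $(k_1,h)$ gives
\begin{equation*}
  2\sum \le 8\,\frac{N}{(\log N)^2}\sum_{h\ \mathrm{even}}\sum_{k_1\le K-h}\mathfrak{S}(d(k_1,h))
  =8\,\frac{K^2}{(\log N)^2}\,\Phi(N)\,N\bigl(1+o(1)\bigr).
\end{equation*}
Since $K^2/(\log N)^2\to 1/(\log 2)^2$, this yields
\begin{equation*}
  S_2(N)\le\Bigl(\frac{1}{\log 2}+\frac{8\Phi(N)}{(\log 2)^2}+o(1)\Bigr)N.
\end{equation*}
By Theorem~\ref{thm:phi_bounds}, $\limsup_{N\to\infty}\Phi(N)$ is bounded by an absolute constant, so this gives $c_2$. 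It also reproduces the ratio $S_1^2/S_2\ge 1/(\log 2+8\Phi(N))$ in Theorem~\ref{thm:main}.

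The main obstacle is the uniformity of the sieve bound. The differences $d$ run up to $2^K$, which is not small compared with $N$, so I need the Selberg sieve bound in a form with an explicit constant and with uniformity in $d$ up to $N/(\log N)^2$. The error terms must be summable over the roughly $K^2/2$ pairs without spoiling the $o(N)$ total. I would handle this by quoting the standard explicit form of the upper-bound sieve for $\#\{n\le x:\ n,\,n+d\in\PP\}$, whose error term is $O\bigl(x(\log\log x)/(\log x)^3\bigr)$ uniformly for $d\le x$. Multiplied by $K^2\asymp(\log N)^2$, this total error is $O\bigl(N\log\log N/\log N\bigr)=o(N)$. The deeper input, namely that the average of $\mathfrak{S}$ over this sparse sequence is bounded, is already supplied by Theorem~\ref{thm:phi_bounds}.
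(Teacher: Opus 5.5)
Your proposal is correct and follows the paper's proof essentially step by step: the same prime number theorem computation for $S_1(N)$, the same diagonal/off-diagonal split with the odd-$h$ terms discarded by parity, and the same Halberstam--Richert Selberg bound $\pi_d(x)\le 4\mathfrak{S}(d)\frac{x}{(\log x)^2}\bigl(1+O(\frac{\log\log x}{\log x})\bigr)$, summed against $K^2\Phi(N)$ and closed off with Theorem~\ref{thm:phi_bounds}. The differences are minor and harmless. The paper observes that the odd-$h$ terms vanish exactly, since $n+c_k\ge K+2>2$. Also, the sieve error is relative (it carries the factor $\mathfrak{S}(d)\ll\log\log N$) rather than the uniform absolute $O(x\log\log x/(\log x)^3)$ you quote, but the total error is still $o(N)$.
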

Thus by \eqref{Cauchy-Schwarz}, Theorem~\ref{thm:main} is proved if we set $\delta := c_1^2/c_2>0$.

\section{Proof of Lemma~\ref{prop:main}}
\label{sec:pflem}

In this section, we prove the proposition by finding a lower bound for $S_1$ and an upper bound for $S_2$.
For $S_1$, we may exchange the order of summation to derive
\begin{equation*}
  S_1(N) = \sum_{1\le k\le K}\; \sum_{K+1\le n\le N} \PPP (n+c_k).
\end{equation*}
For each fixed $k$, as $n$ runs over $K+1$ to $N$, $c_k+n$ runs over $K+1+c_k$ to $N+c_k$,
thus 
\begin{equation*}
  \sum_{K+1\le n\le N}\PPP(n+c_k) = \pi(N+c_k)-\pi(K+c_k)
\end{equation*}
where $\pi(x) := \sum_{ p\le x}1$ is the prime counting function.

By the prime number theorem and
\begin{align*}
    \pi(N+c_k)-\pi(N)\leq c_k\leq\frac{N}{(\log N)^2},
\end{align*}
we have
\begin{align*}
    \pi(N+c_k)=\frac{N}{\log N}+O\left(\frac{N}{(\log N)^2}\right).
\end{align*}
Since $K+c_k\leq K+c_K=2^K \leq\frac{N}{(\log N)^2}$,
\begin{align*}
    \pi(K+c_k)\leq K+c_k\leq \frac{N}{(\log N)^2},
\end{align*}
which gives the inner sum of $S_1(N)$ as
\begin{align*}
    \sum_{K+1\le n\le N}\PPP(n+c_k)=\pi(N+c_k)-\pi(K+c_k)=\frac{N}{\log N}+O\left(\frac{N}{(\log N)^2}\right).
\end{align*}
Therefore, we have
\begin{align*}
    S_1(N)=\sum_{k=1}^K\left(\frac{N}{\log N}+O\left(\frac{N}{(\log N)^2}\right)\right)
   =K\frac{N}{\log N}+O\left(K\frac{N}{(\log N)^2}\right)
\end{align*}
Since $K=\frac{\log N}{\log 2}+O(\log\log N)$, then
\begin{align*}
    S_1(N)=\frac{N}{\log2}+O\left(N\frac{\log\log N}{\log N}\right).
\end{align*}

Now we turn to the upper bound for $S_2$.
We expand $r(n)^2$ and separate the contributions into two parts
\begin{equation}
    \begin{split}
     S_2(N) =\sum_{1\le k\le K} \; \sum_{K+1\le n \le N}\PPP (n+c_k) + \underset{\substack{1\le k_1\le K\\1\le k_2\le K\\k_1\neq k_2}}{\sum\sum}
    \; \sum_{K+1\le n\le N}\PPP(n+c_{k_1})\PPP(n+c_{k_2}).
    \label{eq:S2}
    \end{split}
\end{equation}
The first sum in \eqref{eq:S2} is exactly $S_1(N)$, and we denote the second sum in \eqref{eq:S2} as $D(N)$.
To complete the proof, it suffices to show that $D(N) = O(N)$.

By symmetry, we can write 
\begin{equation*}
    D(N) = 2\sum_{1\le h\le K-1}\; \sum_{1\le k_2 \le K-h} F(k_2, h)
\end{equation*}
where
\begin{equation*}
    F(k_2,h):= \sum_{K+1\le n\le N}\PPP(n+c_{k_2+h})\PPP(n+c_{k_2}).
\end{equation*}
Define the difference
\begin{equation*}
    d = d(k_2,h):=c_{k_2+h}-c_{k_2}=2^{k_2}(2^h-1)-h,
\end{equation*}
which depends only on $k_2$ and $h$.
Since $k_2\ge 1$, the term $2^{k_2}(2^h-1)$ is even, so $d\equiv -h \bmod 2$.
One can verify that $1\le d\le 2^K\le N/(\log N)^2$.

If $h$ is odd, then $d$ is odd as well, and $n+c_{k_2}$, $n+c_{k_2}+d$ have opposite parity.
Since $n\ge K+1$ and $c_{k_2}\ge c_1 = 1$, we have $n+c_{k_2}\ge K+2\ge 3$ and $n+c_{k_2}+d\ge K+3\ge 4$ for sufficiently large $N$.
Hence neither can equal to $2$, and the even one of the pair exceeds $2$, so it cannot be prime.
This gives $F(k_2,h) = 0$ for all odd $h$.

Let $\mathcal{I}_K:=\{(k_2,h): 1\le h\le K-1,\ h\text{ even},\ 1\le k_2\le K-h\}$,
so that $|\mathcal{I}_{K}| = \frac{K^2}{4}+O(K)$.
The above shows that 
\begin{equation}\label{eq:D1_bound}
    D(N) = 2\sum_{(k_2,h)\in\mathcal{I}_{K}}F(k_2, h ).
\end{equation}

Now we turn to the case $d$ is even.
Let $\pi_d(x)$ be the counting function defined in \eqref{eq:HLpp}.
Since $F(k_2,h)$ counts primes $p\le N+c_{k_2}$ such that $p+d$ is also prime (where $p=n+c_{k_2}$, $d$ depends on $k_2$ and $h$), we have $F(k_2,h)\le \pi_d(N+c_{k_2})$ with $d=d(k_2,h)$.
In this case, the upper bound of Selberg sieve (see Halberstam and Richert \cite[Theorem 3.11]{HalberstamRichert}) asserts that for $x\ge 2$,
\begin{equation*}
  \pi_d(x)\leq 8\prod_{p>2}\left(1-\frac{1}{(p-1)^2}\right) \prod_{\substack{p|d \\ p>2}}\frac{p-1}{p-2}\frac{x}{\log^2x}\left(1+O\left(\frac{\log\log x}{\log x}\right)\right),
\end{equation*}
uniformly in $d$.
Here, $\prod_{p>2}\left(1-\frac{1}{(p-1)^2}\right)=C_2$ is the twin prime constant defined in Section \ref{sec:int}. 
By the definition of the singular series $\mathfrak{S}(d)$, we can rewrite the explicit upper bound for sufficiently large $N$ as
\begin{equation}\label{eq:F_bound_explicit}
  F(k_2,h) \le 4\mathfrak{S}(d(k_2,h)) \frac{N}{(\log N)^2} \left(1+O\left(\frac{\log\log N}{\log N}\right)\right).
\end{equation}
Substituting \eqref{eq:F_bound_explicit} back into \eqref{eq:D1_bound} yields the upper bound
\begin{equation*}
  D(N) \le \frac{8N}{(\log N)^2} \left(\sum_{(k_2,h)\in\mathcal{I}_K} \mathfrak{S}(d(k_2,h))\right) (1+o(1)).
\end{equation*}
By the definition of $\Phi(N)$, the inner summation is exactly $K^2 \Phi(N)$.
Recalling that our truncation gives $K \sim \log N / \log 2$, we have $K^2 \sim (\log N)^2 / (\log 2)^2$. Thus, we obtain
\begin{equation*}
  D(N) \le \frac{8\Phi(N)}{(\log 2)^2} N (1+o(1)).
\end{equation*}
Since the diagonal contribution is exactly $S_1(N) \sim \frac{N}{\log 2}$, we conclude that 
\begin{equation*}
  S_2(N) = S_1(N) + D(N) \le \left(\frac{1}{\log 2} + \frac{8\Phi(N)}{(\log 2)^2}\right) N (1+o(1)).
\end{equation*}
By Theorem~\ref{thm:phi_bounds}, $\Phi(N)$ is bounded as $N \to \infty$. Thus, there exists an absolute constant $c_2 > 0$ such that $S_2(N) \le c_2 N$ for sufficiently large $N$. This completes the proof of Lemma~\ref{prop:main}. 
\qed

\section{Proof of Theorem~\ref{thm:phi_bounds}}
\label{sec:phi_bounds}

In this section, we establish the absolute upper and lower bounds for the mean value function $\Phi(N)$ over our sequence.

\subsection{Upper bound of \texorpdfstring{$\Phi(N)$}{Phi(N)}}

Let $g$ be the multiplicative function supported on odd square-free integers defined by $g(p) = \frac{1}{p-2}$. We can write the inner product in singular series $\mathfrak{S}(d)$ as
\begin{equation*}
   f(d):= \prod_{\substack{p|d\\p>2}}\frac{p-1}{p-2}=\prod_{\substack{p|d\\p>2}}\left(1+\frac{1}{p-2}\right)=\sum_{\substack{l|d \\ 2\nmid l}} \mu^2(l)g(l).
\end{equation*}
Denote 
\begin{equation*}
    \sigma := \sum_{(k_2,h)\in\mathcal{I}_K} \sum_{\substack{l|d \\ 2\nmid l}} \mu^2(l)g(l),
\end{equation*}
we can write $\Phi(N)$ as
\begin{equation*}
  \Phi(N) = \frac{2C_2}{K^2} \sigma.
\end{equation*}

To evaluate $\sigma$, we proceed by first fixing an even $h$ and summing over $k_2$, defining
\begin{equation*}
  \sigma_h := \sum_{1\le k_2 \le K-h} \sum_{\substack{l|d \\ 2\nmid l}} \mu^2(l)g(l) = \sum_{\substack{l \ge 1 \\ 2\nmid l}} \mu^2(l)g(l) M_l(h),
\end{equation*}
where $M_l(h) := \#\{1\le k_2 \le K-h\mid 2^{k_2}(2^h-1)\equiv h \bmod l\}$.

For each fixed even $h$ and odd square-free $l\ge 1$, we write  $b=l/\gcd(l,2^h-1)$.
By analyzing the congruence conditions modulo $\gcd(l,2^h-1)$ and modulo $b$ separately, and applying the Chinese Remainder Theorem, one finds that 
\begin{equation*}
  M_l (h) 
  \begin{cases}
    =0&\text{if }\gcd(l,2^h-1)\nmid h,\\
    \le \frac{K-h}{\ord_b(2)}+1&\text{if }\gcd(l,2^h-1)\mid h,
  \end{cases}
\end{equation*}
where $\ord_b(2)$ is the multiplicative order of $2$ modulo $b$.

Parameterizing the sum by $l = \gamma\beta$ with $\gamma\mid\gcd(2^h-1,h)$ and $\gcd(\gamma,\beta)=1$. By the multiplicativity of $g$, we have $\mu^2(l)g(l) = \mu^2(\gamma)g(\gamma)\mu^2(\beta)g(\beta)$. Thus, we obtain
\begin{align*}
  \sigma_h \le &\sum_{\substack{l\geq1,\,2\nmid l\\\gcd(l,2^h-1)\mid h}}\mu^2{(l)}g(l)\left(\frac{K-h}{\ord_b(2)}+1\right)\\
  &=\sum_{\substack{l\geq1,\,2\nmid l\\\gcd(l,2^h-1)\mid h}}\mu^2{(l)}g(l)\left(\frac{K-h}{\ord_b(2)}\right)+\sum_{\substack{l\geq1,\,2\nmid l\\\gcd(l,2^h-1)\mid h}}\mu^2{(l)}g(l)\\
  &\leq\underbrace{\left(\sum_{\substack{\gamma \mid h \\ 2\nmid \gamma}} \mu^2(\gamma)g(\gamma)\right)}_{=\,f(h)} \cdot \left( \sum_{\substack{\beta \le 2^K \\ 2\nmid \beta}} \mu^2(\beta)g(\beta) \frac{K-h}{\ord_{\beta}(2)} \right)+\sum_{\beta\leq 2^K,\,2\nmid \beta}\mu^2(\beta)g(\beta).   
\end{align*}

We split the inner sum into a main term governed by the Romanov-type constant 
\begin{equation}\label{eq:CRom}
    C_{\mathrm{Rom}} := \sum_{2\nmid \beta} \frac{\mu^2(\beta)g(\beta)}{\ord_{\beta}(2)},
\end{equation}
and an error term coming from the $+1$. 
Note that $\beta \mid d$ implies $\beta \le d$. Since $k_2 + h \le K$, we have $d < 2^{k_2+h} \le 2^K$, which ensures $\beta < 2^K$. 
Here $C_{\mathrm{Rom}}$ is a convergent absolute constant, which we refer to as the Romanov-type constant (see Nathanson \cite[Lemma 7.8]{Nathanson1996}). It comes from the part involving the divisors of $2^n-1$. The absolute convergence of this series was first established in Romanov's seminal work on the representation of integers as the sum of a prime and a power of $2$. Thus, we obtain
$$\sigma_h \le f(h) \Big( (K-h) C_{\mathrm{Rom}} \Big)+ \sum_{\substack{\beta \le 2^K \\ 2\nmid \beta}} \mu^2(\beta)g(\beta) .$$

Recall that in the double sum over the index set $\mathcal{I}_K$, for each fixed even $h = 2j$, the inner index $k_2$ ranges from $1$ to $K-h$. Therefore, the inner summation over $k_2$ naturally produces a multiplier of $(K-2j)$, which represents the exact length of the available interval. This constraint restricts our summation to a triangular domain $\left\{ (j, k_2) \in \mathbb{Z}^2 : 1 \le j \le \frac{K-1}{2}, \ 1 \le k_2 \le K-2j \right\}.$
To evaluate the total sum over this triangular domain, we first consider the simple linear average of $f(2j)$ up to a real boundary $X \ge 1$. Reversing the order of summation, we extract the odd square-free divisors $\eta$ of $j$,
\begin{equation*}
  \sum_{j \le X} f(2j) = \sum_{j \le X} \sum_{\substack{\eta|j \\ 2\nmid \eta}} \mu^2(\eta)g(\eta) \le X \sum_{\substack{\eta \ge 1 \\ 2\nmid \eta}} \frac{\mu^2(\eta)g(\eta)}{\eta}.
\end{equation*}
Surprisingly, this infinite sum is precisely the inverse of the twin prime constant
\begin{equation*}
  \sum_{\substack{\eta \ge 1 \\ 2\nmid \eta}} \frac{\mu^2(\eta)g(\eta)}{\eta} = \prod_{p>2} \left(1+\frac{1}{p(p-2)}\right) = \prod_{p>2} \frac{(p-1)^2}{p(p-2)} = \frac{1}{C_2}.
\end{equation*}
Thus, the linear sum is bounded by  
\begin{equation}
\label{eq:even_h_bound}
    \sum_{j \le X} f(2j) \le \frac{1}{C_2}X. 
\end{equation}
Then the sum over the triangular domain can be evaluated by applying partial summation,
\begin{equation*}
\begin{split}
  \sum_{j=1}^{\lfloor (K-1)/2 \rfloor} f(2j)(K-2j) &= \int_{1^{-}}^{K/2} (K-2t) \, d\left( \sum_{j \le t} f(2j) \right) \\
  &= \left[ (K-2t) \sum_{j \le t} f(2j) \right]_{1^{-}}^{K/2} + \int_{1}^{K/2} 2\left( \sum_{j \le t} f(2j) \right)  \, dt \\
  &\le 0 + \int_{0}^{K/2} 2\left( \frac{1}{C_2} t \right)  \, dt = \frac{1}{C_2} \left[ t^2 \right]_0^{K/2} = \frac{1}{4 C_2} K^2.
\end{split}
\end{equation*}

To bound the error term arising from the $+1$ in the inner summation, we evaluate the sum of the multiplicative function $g(\beta)$ over odd square-free integers $\beta \le 2^K$,
\begin{equation*}
  \sum_{\substack{\beta \le 2^K \\ 2\nmid \beta}} \mu^2(\beta)g(\beta) \le \prod_{2 < p \le 2^K} \left( 1 + \frac{1}{p-2} \right) = \prod_{2 < p \le 2^K} \frac{p-1}{p-2}.
\end{equation*}
We decompose each factor to explicitly extract the twin prime constant $C_2$ and the classical Mertens product:
\begin{equation*}
  \frac{p-1}{p-2} = \left( 1 - \frac{1}{p} \right)^{-1} \left( 1 - \frac{1}{(p-1)^2} \right)^{-1}.
\end{equation*}
Taking the product over all odd primes $p \le 2^K$, the second factor converges rapidly to the inverse of the twin prime constant:
\begin{equation*}
  \prod_{2 < p \le 2^K} \left( 1 - \frac{1}{(p-1)^2} \right)^{-1} < \prod_{p > 2} \left( 1 - \frac{1}{(p-1)^2} \right)^{-1} = \frac{1}{C_2}.
\end{equation*}
For the first factor, Mertens' third Theorem gives that
\begin{align*}
    \prod_{2 < p \le X} \left(1 - \frac{1}{p}\right)^{-1} \sim \frac{e^{\gamma}}{2} \log X,
\end{align*}
where $\gamma$ is the Euler constant. 
We obtain
\begin{equation*}
    \prod_{2 < p \le 2^K} \left( 1 - \frac{1}{p} \right)^{-1} \le \frac{e^{\gamma}}{2}\log\left(2^K\right)(1+o(1)).
\end{equation*}
Therefore, we get the following upper bound for $\sigma$:
\begin{equation*}
\begin{split}
  \sigma \le \left(\frac{1}{4 C_2} K^2 C_{\mathrm{Rom}} + \frac{e^{\gamma} \log 2}{4 C_2} K^2 \right)(1+o(1)) = \frac{K^2}{4 C_2} \left( C_{\mathrm{Rom}} + e^{\gamma}\log 2 \right) (1+o(1)).
\end{split}
\end{equation*}
Substituting this explicit bound for $\sigma$ back into our formula $\Phi(N) = \frac{2C_2}{K^2} \sigma$, we observe a remarkable exact cancellation of $K^2$ and the twin prime constant $C_2$:
\begin{equation*}
  \limsup_{N\to\infty} \Phi(N) \le \frac{1}{2} \left( C_{\mathrm{Rom}} + e^{\gamma}\log 2\right).
\end{equation*}

\subsection{Lower bound of \texorpdfstring{$\Phi(N)$}{Phi(N)}}
To prove the lower bound of $\Phi(N)$, we use the following lemma regarding the equidistribution of $d(k_2,h)$ modulo $p$.
\begin{lemma}\label{lem:equidist}
  For each odd prime $p$,
  \begin{equation*}
    \rho_p(K):=\frac{|\{(k_2,h)\in\mathcal{I}_K: p\mid d(k_2,h)\}|}{|\mathcal{I}_K|}
    = \frac{1}{p}+O_p(K^{-1}).
  \end{equation*}
\end{lemma}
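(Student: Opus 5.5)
The proof is a periodicity argument: reduce $d(k_2,h)$ modulo $p$ to a function of finitely many residue classes, count lattice points of the triangle $\mathcal{I}_K$ in each class, and check that exactly a $1/p$ fraction of the classes satisfy $p\mid d$.

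\textbf{Periodicity.} Fix an odd prime $p$ and let $t=\ord_p(2)$, so $t\mid p-1$ and $\gcd(t,p)=1$. Write $h=2j$, so that $\mathcal{I}_K$ becomes the triangle
\begin{equation*}
T_K=\{(k_2,j)\in\mathbb{Z}^2:\ 1\le j\le (K-1)/2,\ 1\le k_2\le K-2j\}.
\end{equation*}
Then
\begin{equation*}
d(k_2,2j)\equiv 2^{k_2}\bigl(4^{j}-1\bigr)-2j \pmod p .
\end{equation*}
The residue $2^{k_2}\bmod p$ depends only on $k_2\bmod t$. The residue $4^{j}\bmod p$ depends only on $j\bmod t$, and $2j\bmod p$ depends only on $j\bmod p$. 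Hence the condition $p\mid d(k_2,2j)$ depends only on the class of $(k_2,j)$ in $(\mathbb{Z}/t\mathbb{Z})\times(\mathbb{Z}/tp\mathbb{Z})$. Since $\gcd(t,p)=1$, the CRT identifies $\mathbb{Z}/tp\mathbb{Z}$ with $\mathbb{Z}/t\mathbb{Z}\times\mathbb{Z}/p\mathbb{Z}$.

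\textbf{Counting lattice points per class.} For each of the $t^2p$ classes $(a,b)\in\mathbb{Z}/t\times\mathbb{Z}/tp$, I would count the points of $T_K$ lying in that class. $T_K$ is a triangle of area $K^2/4+O(K)$ with perimeter $O(K)$. Tiling the plane by $t\times tp$ boxes, each box meeting $T_K$ in its interior contains exactly one point of every class. The boxes meeting the boundary number $O_p(K)$. So each class contains
\begin{equation*}
\frac{|\mathcal{I}_K|}{t^2p}+O_p(K)
\end{equation*}
points. This is the only analytic input, and its error, divided by $|\mathcal{I}_K|\asymp K^2$, is what produces the $O_p(K^{-1})$ term.

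\textbf{Counting good classes.} It remains to show that exactly $t^2p/p=t^2$ classes satisfy the divisibility. Fix $a=k_2\bmod t$ and $j\bmod t$. Then $u:=2^{a}(4^{j}-1)\bmod p$ is determined. The condition becomes $2j\equiv u\pmod p$, and since $p$ is odd, $2$ is invertible modulo $p$. So the condition holds for exactly one residue $j\equiv u\cdot 2^{-1}\pmod p$. By the CRT this $j\bmod p$ can be chosen independently of $j\bmod t$. Thus for each of the $t\cdot t$ choices of $(k_2\bmod t,\ j\bmod t)$ there is exactly one good class of $j\bmod p$, giving $t^2$ good classes out of $t^2p$.

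\textbf{Conclusion.} Summing over the good classes gives
\begin{equation*}
|\{(k_2,h)\in\mathcal{I}_K: p\mid d\}|=\frac{|\mathcal{I}_K|}{p}+O_p(K),
\end{equation*}
and dividing by $|\mathcal{I}_K|=K^2/4+O(K)$ yields $\rho_p(K)=1/p+O_p(K^{-1})$.

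The main point needing care is the bookkeeping in the good-class count. One must use the actual group-theoretic period $t$ of $2$ modulo $p$, not the index $(p-1)/t$, and note that the dependence on $j$ splits via the CRT into a part modulo $t$ and a part modulo $p$. That split is exactly what makes the proportion precisely $1/p$.
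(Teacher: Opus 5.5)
Your proof is correct and follows the same strategy as the paper: you reduce $p\mid d(k_2,2j)$ to a periodic condition on $(k_2,j)$, count solutions in one period, and absorb the boundary of the triangle into an $O_p(K)$ error. The only real difference is the residue count. The paper uses the period $(\ord_p(2),\,p\,\ord_p(4))$ and solves for $k_2$, which forces a three-case split on whether $\ord_p(4)\mid j$ and $p\mid j$, plus the fact that exactly $\ord_p(2)$ nonzero residues lie in $\langle 2\rangle$. You instead fix $(k_2 \bmod t,\, j \bmod t)$ and solve the linear congruence $2j\equiv u \pmod p$, which has exactly one solution in every case, including $u\equiv 0$, so no case analysis is needed. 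You also make the boundary estimate explicit through the tiling by $t\times tp$ boxes, where the paper only asserts it.
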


\begin{proof}[Proof of Lemma~\ref{lem:equidist}]
  Since $h$ is restricted to be even in $\mathcal{I}_K$, we parameterize $h = 2j$.
  The condition $p \mid d(k_2, h)$ becomes
  \begin{equation}\label{eq:congcond}
    2^{k_2}(4^j - 1) \equiv 2j \pmod{p}.
  \end{equation}
  The tuple $(2^{k_2} \bmod p,\, 4^j \bmod p,\, 2j \bmod p)$ is periodic in $(k_2, j)$ with period $(\ord_p(2),\, p\cdot\ord_p(4))$, since $\gcd(\ord_p(4), p) = 1$.
  We count solutions of \eqref{eq:congcond} in one fundamental domain $k_2 \in \{0, \dots, \ord_p(2)-1\}$, $j \in \{0, \dots, p\cdot\ord_p(4) - 1\}$.

  \smallskip
  We first consider the case $\ord_p(4) \mid j$ (so $4^j \equiv 1 \pmod{p}$).
  Then \eqref{eq:congcond} reduces to $p \mid 2j$, hence $p \mid j$.
  Since $\gcd(\ord_p(4), p) = 1$, the joint condition $\ord_p(4) \mid j$ and $p \mid j$ forces $p\cdot\ord_p(4)\mid j$,
  giving exactly $1$ value of $j$ (namely $j=0$) and all $\ord_p(2)$ values of $k_2$.
  So the contribution of this case is $\ord_p(2)$.

  Now we deal with the case $\ord_p(4) \nmid j$ and $p \mid j$.
  Then $4^j \not\equiv 1$ and \eqref{eq:congcond} becomes $2^{k_2}(4^j - 1) \equiv 0 \pmod{p}$, impossible since $p\nmid 2^{k_2}$ and $p\nmid(4^j-1)$.
  So the contribution of this case is $0$.

  Finally, we turn to the case $\ord_p(4) \nmid j$ and $p \nmid j$.
  Then \eqref{eq:congcond} becomes $2^{k_2} \equiv 2j(4^j - 1)^{-1} \pmod{p}$.
  There are $\ord_p(4) - 1$ values of $j \bmod \ord_p(4)$ with $4^j \not\equiv 1$.
  For each, as $j$ varies mod~$p$, the target $2j(4^j-1)^{-1}\bmod p$ ranges over all $p-1$ nonzero residues
  (since $\gcd(\ord_p(4),p)=1$ and multiplication by $2(4^j-1)^{-1}\not\equiv 0$ is a bijection).
  Exactly $\ord_p(2)$ of the $p-1$ residues lie in the group generated by $2$, each determining $k_2$ uniquely mod~$\ord_p(2)$.
  So in this case, the contribution is $(\ord_p(4)-1)\cdot\ord_p(2)$.

  In total, $\ord_p(2)+(\ord_p(4)-1)\cdot\ord_p(2) = \ord_p(4)\cdot\ord_p(2)$ solutions out of $\ord_p(2)\cdot p\cdot\ord_p(4) = p\cdot\ord_p(4)\cdot\ord_p(2)$ pairs, giving density $1/p$.
  Boundary effects contribute $O_p(K)$ to $|\mathcal{I}_K|\rho_p(K)$, hence $\rho_p(K)=1/p+O_p(K^{-1})$.
\end{proof}

For convenience, we define $f_P(d)$ as follows. For prime $P>2$, define
\begin{equation*}
  f_P(d) := \prod_{\substack{p|d\\2<p\le P}}\frac{p-1}{p-2},
\end{equation*}
so that $f(d)\ge f_P(d)$.
Since $f_P$ depends only on $d\bmod Q_P$ where $Q_P:=\prod_{2<p\le P}p$, we can apply the Chinese remainder theorem. This theorem ensures that the congruence conditions modulo distinct primes $p \mid Q_P$ are independent. Therefore,we can naturally extend Lemma~\ref{lem:equidist} to the square-free modulus $Q_P$ to compute the average of $f_P$ over $\mathcal{I}_K$ for any fixed $P$:
\begin{align*}
  \frac{1}{|\mathcal{I}_K|}\sum_{(k_2,h)\in\mathcal{I}_K}f_P(d)
  &= \prod_{2<p\le P}\left(\frac{1}{p}\cdot\frac{p-1}{p-2}+\frac{p-1}{p}\right)+O_P(K^{-1})
  \\ &= \prod_{2<p\le P}\left(1+\frac{1}{p(p-2)}\right)+O_P(K^{-1})
\end{align*}
as $K$ tends to infinity.
Therefore
\begin{equation*}
  \liminf_{K\to\infty}\frac{1}{|\mathcal{I}_K|}\sum_{(k_2,h)\in\mathcal{I}_K} f_P(d)\ge \prod_{2<p\le P}\left(1+\frac{1}{p(p-2)}\right).
\end{equation*}
Since the left-hand side is independent of $P$ as we let $P\to\infty$, we obtain
\begin{equation*}
  \liminf_{K\to\infty}\frac{1}{|\mathcal{I}_K|}\sum_{(k_2,h)\in\mathcal{I}_K} f(d)
  \ge \prod_{p>2}\left(1+\frac{1}{p(p-2)}\right)
  = \prod_{p>2}\frac{(p-1)^2}{p(p-2)}
  = \frac{1}{C_2}.
\end{equation*}
Since $\Phi(N) = \frac{2C_2}{K^2}\sum_{(k_2,h)\in\mathcal{I}_K} f(d)$ and $|\mathcal{I}_K| \sim \frac{K^2}{4}$, this gives $\liminf_{N\to\infty}\Phi(N)\ge \frac{1}{2}$.
\qed

\section{Lower bound of the density}
\label{sec:DensityProofs}

\subsection{Unconditional density \texorpdfstring{$\delta$}{delta}}
Now we can explicitly determine the lower bound for the density $\delta$ in Theorem \ref{thm:main}. Recall that $\RR(N)$ denotes the set of integers $n \in [1, N]$ satisfying Conjecture~\ref{conj:sun}. Since $\RR^{\ast}(N) \subseteq \RR(N)$ and $|\RR(N) \setminus \RR^{\ast}(N)| \le K = O(\log N)$, we have
\begin{equation*}
  |\RR(N)| \ge |\RR^{\ast}(N)| \ge \frac{S_1(N)^2}{S_2(N)}.
\end{equation*}
From our previous discussion, the first moment is
\begin{equation*}
  S_1(N) \sim \frac{1}{\log 2} N.
\end{equation*}
For the second moment, we have established the strictly explicit upper bound $S_2(N) \le C_{\text{upper}} N$, where the constant $C_{\text{upper}}$ is given by
\begin{equation*}
 C_{\text{upper}} = \frac{1}{\log 2} + \frac{8}{(\log 2)^2} \limsup_{N\to\infty} \Phi(N) \le \frac{1}{\log 2} + \frac{4}{(\log 2)^2} \left( C_{\mathrm{Rom}} + e^{\gamma} \log 2 \right).
\end{equation*}

To establish an explicit bound for $C_{\mathrm{Rom}}$, we introduce a truncation parameter $M \ge 1$ and decompose the series according to the multiplicative order $k = \ord_\beta(2)$ as follows:
\begin{equation*}
  C_{\mathrm{Rom}} = \sum_{\substack{2\nmid \beta \\ \ord_\beta(2) \le M}} \frac{\mu^2(\beta)g(\beta)}{\ord_\beta(2)} + \sum_{\substack{2\nmid \beta \\ \ord_\beta(2) > M}} \frac{\mu^2(\beta)g(\beta)}{\ord_\beta(2)}.
\end{equation*}
The first sum is finite and can be evaluated directly. We follow the numerical evaluation in \cite{Elsholtz2014}. By grouping the terms according to $k = \ord_\beta(2)$ and applying M\"obius inversion over the odd square-free divisors of $2^k-1$, an exact computation for $M = 240$ yields that this main term is strictly bounded by $1.9328$.

For the tail of the series, we appeal to an estimate established by Chen and Sun \cite[Lemma 4]{ChenSun2004} (see also \cite{Elsholtz2014}). They proved that for any $M \ge 1$,
\begin{equation*}
  \sum_{\substack{2\nmid \beta \\ \ord_\beta(2) > M}} \frac{g(\beta)}{\ord_\beta(2)} \le 2.7961 \frac{\log M}{M}.
\end{equation*}
Taking $M = 240$, the contribution of the tail is explicitly bounded by $2.7961 \frac{\log 240}{240} < 0.0639$. Combining these estimates, we unconditionally conclude that
\begin{equation*}
  C_{\mathrm{Rom}} < 1.9328 + 0.0639 < 1.9967.
\end{equation*}

Now we explicitly determine the lower bound for the density $\delta$ stated in Theorem \ref{thm:main}. Using the classical values $\log 2 \approx 0.69315$, Euler's constant $\gamma \approx 0.57722$ (which gives $e^\gamma \approx 1.78107$), and our bound $C_{\mathrm{Rom}} < 1.9967$, the term in the parenthesis evaluates to:
\begin{equation*}
  C_{\mathrm{Rom}} + e^{\gamma}\log 2 < 1.9967 + 1.78107 \times 0.69315 \approx 1.9967 + 1.23456 = 3.23126.
\end{equation*}
Thus, the upper bound constant of $S_2(N)$ is strictly controlled by
\begin{equation*}
  C_{\text{upper}} < 1.4427 + \frac{4}{(0.69315)^2} \times 3.23126 \approx 1.4427 + 8.3255 \times 3.23126 \approx 1.4427 + 26.9019 \le = 28.3446.
\end{equation*}
Substituting these explicit values back into the Cauchy-Schwarz inequality, we obtain the unconditional lower bound for the density:
\begin{equation*}
 \liminf_{N \to \infty} \frac{|\mathcal{R}(N)|}{N} \ge \frac{(1/\log 2)^2}{28.3446} \approx \frac{2.0814}{28.3446} > 0.0734 
\end{equation*}
Therefore, we have unconditionally proven that the lower density of natural numbers satisfying Conjecture~\ref{conj:sun} is at least $7.34\%$.

\subsection{Density under the uniformly prime pairs conjecture}
In this section, we discuss the limitations of our method under the uniform version of the Hardy-Littlewood prime pairs conjecture as stated in Theorem~\ref{thm:conditional}.
Since $d(k_2,h)\le N/(\log N)^2$ for all $(k_2,h)\in\mathcal{I}_K$, the uniform assumption ensures that
\begin{equation*}
    \pi_d(N+c_{k_2}) = \mathfrak{S}(d)\frac{N}{(\log N)^2}(1+o(1))
\end{equation*}
holds uniformly for all pairs $(k_2,h)\in\mathcal{I}_K$ as $N$ tends to infinity.
Moreover, $\pi_d (K+c_{k_2})\le \pi(K+c_{k_2}) = O(N/(\log N)^2)$ by the prime number theorem, so
\begin{equation*}
    F(k_2,h) = \pi_d(N+c_{k_2})-\pi_d(K+c_{k_2})
    =\mathfrak{S}(d)\frac{N}{(\log N)^2}(1+o(1))
\end{equation*}
uniformly for all $(k_2,h)\in\mathcal{I}_K$.

For $D(N)$, by our assumption and \eqref{eq:D1_bound}, we have
\begin{align*}
        D(N) 
        &= \frac{2N}{(\log N)^2}\left(\sum_{(k_2,h)\in\mathcal{I}_K}\mathfrak{S}(d(k_2,h))\right)(1+o(1)).
\end{align*}
Then by the definition of $\Phi(N)$, we have $\sum_{(k_2,h)\in\mathcal{I}_K}\mathfrak{S}(d(k_2,h)) = K^2\Phi(N)$.
Since $K=\frac{\log N}{\log 2}(1+o(1))$, this gives us 
\begin{equation*}
    D(N) = \frac{2\Phi(N)N}{(\log 2)^2}(1+o(1)).
\end{equation*}
Since $S_2(N) = S_1(N)+D(N)$, \eqref{Cauchy-Schwarz} gives

\begin{equation*}
    \bigl| \RR(N) \bigr| \ge \frac{S_1(N)^2}{S_1(N)+D(N)}\ge \left( \frac{1}{\log 2+2\Phi(N)} +o(1) \right) N.
\end{equation*}
So a smaller value of $\Phi(N)$ yields a better density bound.
Using the fact that
$$\liminf_{N\rightarrow\infty}\Phi(N)\ge \frac{1}{2}$$
as proven in Section~\ref{sec:phi_bounds}, this shows a limitation of our method: even if we assume the uniform Hardy-Littlewood conjecture, our method can only give a density bound of at most
$$\frac{1}{\log 2+2(1/2)} = \frac{1}{\log 2+1} \approx 0.5906\dots.$$

\section*{Acknowledgement}
The authors would like to thank Henrik Bachmann, Kohji Matsumoto, Ade Irma Suriajaya, and Yuta Suzuki for their kind advice. We would also like to thank Zhi-Wei Sun for introducing this conjecture to us. J.~Yu is financially supported by JST SPRING, Grant Number JPMJSP2125, and would like to take this opportunity to thank the "THERS Make New Standards Program for the Next Generation Researchers".

\nocite{*}
\bibliographystyle{amsplain}
\bibliography{ref}

@book{Nathanson1996,
  author    = {Nathanson, Melvyn B.},
  title     = {Additive Number Theory: The Classical Bases},
  series    = {Graduate Texts in Mathematics},
  volume    = {164},
  publisher = {Springer-Verlag},
  address   = {New York},
  year      = {1996},
  isbn      = {0-387-94656-X},
  mrnumber  = {1395371},
  doi       = {10.1007/978-1-4757-3845-2}
}

@book{Montgomery_Vaughan_2006,
  author    = {Montgomery, Hugh L. and Vaughan, Robert C.},
  title     = {Multiplicative Number Theory {I}: Classical Theory},
  series    = {Cambridge Studies in Advanced Mathematics},
  publisher = {Cambridge University Press},
  address   = {Cambridge},
  year      = {2006}
}

@book{Broughan2021,
  author    = {Broughan, Kevin},
  title     = {Bounded Gaps Between Primes: The Epic Breakthroughs of the Early Twenty-First Century},
  publisher = {Cambridge University Press},
  address   = {Cambridge},
  year      = {2021},
  isbn      = {978-1-108-83674-6},
  mrnumber  = {4412547},
  doi       = {10.1017/9781108872201}
}

@book{HalberstamRichert,
  author    = {Halberstam, H. and Richert, H.-E.},
  title     = {Sieve Methods},
  series    = {L.M.S. Monographs, No. 4},
  publisher = {Academic Press},
  address   = {London-New York},
  year      = {1974},
  isbn      = {0-12-318250-6}
}

@article{Romanov1934,
  author    = {Romanoff, N. P.},
  title     = {{\"U}ber einige {S}{\"a}tze der additiven {Z}ahlentheorie},
  journal   = {Mathematische Annalen},
  volume    = {109},
  number    = {1},
  pages     = {668--678},
  year      = {1934},
  publisher = {Springer},
  mrnumber  = {1512916},
  doi       = {10.1007/BF01449161}
}

@article{erdos1950integers,
  author    = {Erd{\H{o}}s, Paul},
  title     = {On integers of the form $2^k+p$ and some related problems},
  journal   = {Summa Brasiliensis Mathematicae},
  volume    = {2},
  number    = {8},
  pages     = {113--123},
  year      = {1950}
}

@article{crocker1971,
  author    = {Crocker, Roger},
  title     = {On the sum of a prime and of two powers of two},
  journal   = {Pacific Journal of Mathematics},
  volume    = {36},
  number    = {1},
  pages     = {103--107},
  year      = {1971},
  publisher = {Mathematical Sciences Publishers}
}

@article{sun2000integers,
  author    = {Sun, Zhi-Wei},
  title     = {On integers not of the form $c(2^a+2^b)+p$},
  journal   = {Acta Arithmetica},
  volume    = {93},
  number    = {3},
  pages     = {261--275},
  year      = {2000}
}

@article{ChenSun2004,
  author    = {Chen, Yong-Gao and Sun, Xue-Gong},
  title     = {On {Romanoff}'s constant},
  journal   = {Journal of Number Theory},
  volume    = {106},
  number    = {2},
  pages     = {275--284},
  year      = {2004},
  doi       = {10.1016/j.jnl.2003.11.009}
}

@article{pintz2006note,
  author    = {Pintz, J{\'a}nos},
  title     = {A note on {Romanov}'s constant},
  journal   = {Acta Mathematica Hungarica},
  volume    = {112},
  number    = {1-2},
  pages     = {1--14},
  year      = {2006},
  publisher = {Springer}
}

@article{Elsholtz2014,
  author    = {Elsholtz, Christian and Schlage-Puchta, Jan-Christoph},
  title     = {On {Romanov}'s constant},
  journal   = {Mathematische Zeitschrift},
  volume    = {288},
  number    = {3-4},
  pages     = {713--724},
  year      = {2018},
  doi       = {10.1007/s00209-017-1908-x}
}

@misc{sun2014anbnmodulom,
  author       = {Sun, Zhi-Wei},
  title        = {On $a^n+bn$ modulo $m$},
  howpublished = {arXiv preprint arXiv:1312.1166},
  year         = {2013}
}

@misc{OEIS,
  author    = {Sun, Zhi-Wei},
  title     = {Sequence {A231201}: Number of ways to write $n = x + y \ (x, y > 0)$ with $2^x + y$ prime},
  howpublished = {The On-Line Encyclopedia of Integer Sequences},
  year      = {2013},
  url       = {https://oeis.org/A231201}
}

@article{Wu2004,
  author    = {Wu, Jie},
  title     = {Chen's double sieve, {Goldbach}'s conjecture and the twin prime problem},
  journal   = {Acta Arithmetica},
  volume    = {114},
  number    = {3},
  pages     = {215--273},
  year      = {2004},
  publisher = {Instytut Matematyczny Polskiej Akademii Nauk},
  doi       = {10.4064/aa114-3-2}
}

@article{Habsieger2006,
  author    = {Habsieger, Laurent and Roblot, Xavier-Fran{\c{c}}ois},
  title     = {On integers of the form $p+2^k$},
  journal   = {Acta Arithmetica},
  volume    = {122},
  number    = {1},
  pages     = {45--50},
  year      = {2006},
  doi       = {10.4064/aa122-1-4}
}

@article{Gallagher1976,
  author    = {Gallagher, Patrick X.},
  title     = {On the distribution of primes in short intervals},
  journal   = {Mathematika},
  volume    = {23},
  number    = {1},
  pages     = {4--9},
  year      = {1976},
  publisher = {Cambridge University Press}
}
\end{document}